\documentclass[11pt]{amsproc}

\usepackage[utf8]{inputenc}
\usepackage[T1]{fontenc}
\usepackage{amsthm,amsmath,amssymb,mathtools}
\usepackage[margin=1.2in]{geometry}
\usepackage{microtype}
\usepackage{graphicx}
\usepackage[colorlinks=true,pdfstartview=FitV,linkcolor=blue,citecolor=blue,urlcolor=blue]{hyperref}

\newcommand\R{\mathbb{R}}
\newcommand\Rot{\operatorname{Rot}}
\numberwithin{equation}{section}
\mathtoolsset{showonlyrefs}

\newcommand{\vv}{V} 

\newtheorem{Thm}{Theorem}
\newtheorem{Lemma}[Thm]{Lemma}
\newtheorem{Prop}[Thm]{Proposition}
\newtheorem{Cor}[Thm]{Corollary}
\theoremstyle{remark}

\title[The symmetric octagon case]{Rigidity in the planar Ulam floating body problem with perimetral densities~$\sigma=\tfrac18,\tfrac38$ under central symmetry}

\author{Oleg Asipchuk}
\address{Oleg Asipchuk, University of Cincinnati, Department of Mathematical Sciences, Cincinnati, OH 45221, USA}
\email{asipchah@ucmail.uc.edu}

\author{Maksim Kosmakov}
\address{Maksim Kosmakov, University of Cincinnati, Department of Mathematical Sciences, Cincinnati, OH 45221, USA}
\email{kosmakmm@ucmail.uc.edu}

\author{Pavel Zatitskii}
\address{Pavel Zatitskii, University of Cincinnati, Department of Mathematical Sciences, Cincinnati, OH 45221, USA}
\email{zatitspl@ucmail.uc.edu}

\subjclass[2020]{Primary: 52A10, 52A38 \qquad Secondary: 34A12, 34A26}

\begin{document}

\begin{abstract}
We prove that the only planar, centrally symmetric, strictly convex body
$K\subset\R^2$ with $C^1$ boundary that floats in equilibrium in every
orientation for the perimetral densities $\sigma=\tfrac18$ or
$\sigma=\tfrac38$ is a disk.
\end{abstract}

\maketitle

\section*{Introduction}

This paper continues our study of Ulam's planar floating-body problem: ``Is a solid of uniform density which will float in water in every position a sphere?'', see Problem 19 in The Scottish Book \cite{SBook2015}. 
We briefly recall the geometric setting. 
In the planar case, a planar domain
$K$ of uniform density floats in equilibrium in every orientation if each
waterline cuts the boundary $\partial K$ into two arcs whose lengths are in the
fixed ratio $\sigma:(1-\sigma)$ (see Auerbach~\cite{Aue1938}); the number $\sigma$ is called the
\emph{perimetral density}. For $\sigma=\frac12$, noncircular examples were first constructed by Zindler~\cite{Zindler1921} and later placed by Auerbach in the floating-body setting. Wegner~\cite{Wegner2003,Weg2007} also constructed noncircular floating bodies for certain implicitly determined values of the physical density, which is a different parameter from the perimetral density used here. 

Bracho, Montejano, and Oliveros~\cite{BMO2001,BMO2004} proved that the disk is the only solution for the low-denominator perimetral densities $ \sigma=\frac13,  \frac14, \frac15, \frac25 $ using the  so-called Zindler carousel formalism. The cases $ \sigma=\frac13$ and $\sigma=\frac14$ were mentioned in~\cite{Aue1938} and attributed to E.~Salkowski~\cite{Salkowski1934}. The case $\sigma=\frac16$ was treated recently in~\cite{AKZsigma16}. Here we prove the corresponding result for centrally symmetric bodies when $\sigma=\frac18$ or $\sigma=\frac38$. For further background on the floating-body problem, Zindler curves, carousels, and the known examples and rigidity results, see \cite{ARSY2026,AKZsigma16,Aue1938,BMO2001,BMO2004,Wegner2003,Weg2007,Weg2019} and the references therein.

The restriction to centrally symmetric bodies is a natural setting in which
rigidity phenomena are expected. At relative density $\frac12$, Falconer
proved the corresponding rigidity statement for origin-symmetric bodies in
dimension three~\cite{Falconer1983}, while the general higher-dimensional
statement follows from an earlier theorem of
Schneider~\cite{Schneider1970}.
A short modern proof was subsequently given
by Florentin, Sch\"utt, Werner, and Zhang
\cite{FlorentinSchuttWernerZhang2022}. This rigidity contrasts with Ryabogin's negative answer to the higher-dimensional version of Ulam's problem: without the symmetry
assumption, there exist strictly convex nonspherical bodies of relative
density $\frac12$ which float in equilibrium in every orientation
\cite{Ryabogin2022,Ryabogin2023}.

The main result is the following.

\begin{Thm}\label{thm:main}
Let $K\subset\R^2$ be a centrally symmetric strictly convex planar body with
$C^1$ boundary. If $K$ floats in equilibrium in every orientation with
perimetral density $\sigma=\frac18$ or $\sigma=\frac38$, then $K$ is a disk.
\end{Thm}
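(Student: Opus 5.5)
We will work in the Zindler carousel formalism of Bracho--Montejano--Oliveros and combine it with central symmetry. Parametrize $\partial K$ by arclength $\gamma(s)$, $s\in\R/L\mathbb{Z}$, and normalize $L=8$, so that a perimetral density $\sigma=\frac{k}{8}$ means that for every $s$ the chord $[\gamma(s),\gamma(s+k)]$ is a waterline, with $k=1$ or $k=3$. By Auerbach's characterization, equilibrium in every orientation is equivalent to the condition that all these chords have equal length $\ell$ and that the midpoint of the chord moves parallel to it. In differential form this reads $\gamma'(s)+\gamma'(s+k)$ parallel to $\gamma(s+k)-\gamma(s)$, with $|\gamma(s+k)-\gamma(s)|=\ell$. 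Equivalently, the tangent directions at the two endpoints make equal angles with the chord.

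Since $\gcd(k,8)=1$, iterating $s\mapsto s+k$ visits all eight points $\gamma(s+j)$, $j=0,\dots,7$. These points form an equilateral octagon $P(s)$, a star polygon when $k=3$, and as $s$ varies it moves as a Zindler carousel. Central symmetry gives $\gamma(s+4)=-\gamma(s)$, because the antipodal map preserves arclength and the point antipodal to $\gamma(s)$ lies at half the perimeter. Hence $P(s)$ is centrally symmetric, so it is determined by four consecutive vertices $v_0,\dots,v_3$ with $v_{j+4}=-v_j$. Write the unit edge directions as $e^{i\theta_j(s)}$, $j=0,\dots,7$, with $\theta_{j+4}=\theta_j+\pi$. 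The closing condition is then automatic, and the carousel is described by only four angle functions.

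The next step is to translate the tangency and equal-angle conditions into an ODE system for the $\theta_j$. At each vertex $v_j$ the tangent direction of $\partial K$ is the bisector-type direction of the two incident edges; for the carousel this gives that the tangent at $v_j$ has angle $\frac12(\theta_{j-1}+\theta_j)$ modulo $\pi$. The speed condition, that vertex $v_j$ moves with unit speed along this tangent, together with $v_{j+1}-v_j=\ell e^{i\theta_j}$, yields
\[
\ell\,\theta_j'\,ie^{i\theta_j}=e^{i(\theta_j+\theta_{j+1})/2}-e^{i(\theta_{j-1}+\theta_j)/2}.
\]
The real part forces $\cos\frac{\theta_{j+1}-\theta_j}{2}=\cos\frac{\theta_j-\theta_{j-1}}{2}$, and up to the relevant branches this says that consecutive exterior angles agree up to sign, $\alpha_j=\pm\alpha_{j-1}$ with $\alpha_j=\theta_{j+1}-\theta_j$. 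The imaginary part gives $\theta_j'$ in terms of the $\alpha$'s.

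Strict convexity and the $C^1$ hypothesis then restrict the admissible sign patterns. For an embedded convex curve the sum of the exterior angles is fixed ($2\pi$ for $k=1$, $6\pi$ for $k=3$), and central symmetry gives $\alpha_{j+4}=\alpha_j$. A case analysis on the sign pattern $\epsilon_j\in\{\pm1\}$ with $\alpha_j=\epsilon_j\alpha_{j-1}$, periodic of period $4$, should leave two possibilities. Either all the $\alpha_j$ are equal, so $P(s)$ is a regular octagon or regular star octagon for every $s$, and then all $v_j$ lie on a fixed circle and $\partial K$ is a circle. Or the pattern alternates. In the alternating case one must show that the resulting curve either fails to be simple and convex, or fails to close up consistently with $\gamma(s+8)=\gamma(s)$ and $\gamma(s+4)=-\gamma(s)$.

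I expect this last exclusion to be the main obstacle. Sign switches between branches can occur on different subintervals, so the pattern may change with $s$. My plan is to show that the set where $\alpha_j(s)\neq\alpha_{j-1}(s)$ is open, and that on each of its components the ODE integrates explicitly: the $\theta_j$ become affine in $s$. On such a component I will use the mass-balance conditions $\sum_j v_j=0$ and equality of the total turning of $\partial K$ with the turning of the octagon to reach a contradiction with strict convexity. For $\sigma=\frac38$ the star configuration needs the extra check that interleaved vertices respect the cyclic order along $\partial K$, and this check is what rules out the alternating branch there.
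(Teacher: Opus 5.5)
\textbf{There is a genuine gap.} The decisive step of your reduction is false. From $|v_{j+1}-v_j|=\ell$ and unit speed you correctly get that edge $j$ makes the same angle, call it $a_j$, with the tangents at both of its endpoints. Write $\phi_j$ for the tangent direction at $v_j$. Then $\theta_j=\phi_j+a_j$ and $\phi_{j+1}=\theta_j+a_j$, so $\theta_j=\frac12(\phi_j+\phi_{j+1})$: the \emph{edge} direction is the average of the tangent directions at its two ends. The exterior angle at $v_j$ is $\theta_j-\theta_{j-1}=a_{j-1}+a_j$; this is the paper's relation $x_i+\alpha_{i-1}+\alpha_i=\pi$.

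You inverted this into $\phi_j=\frac12(\theta_{j-1}+\theta_j)$, which holds if and only if $a_{j-1}=a_j$. Imposing it at every vertex forces all $a_j$, hence all exterior angles, to be equal. The octagon is then regular for every $s$, which is precisely the equilibrium/circle case. So your identity $\cos\frac{\theta_{j+1}-\theta_j}{2}=\cos\frac{\theta_j-\theta_{j-1}}{2}$ already assumes the conclusion. The ``alternating branch'' you plan to exclude is an artifact of that assumption; it cannot even occur, since the exterior angles lie in $(0,\pi)$. A warning sign is that your derivation never uses the value of $\sigma$. For $\sigma=\frac12$ the same reasoning would say that the tangents at the ends of every halving chord are perpendicular to it. That would fix the chord midpoints and force a circle, contradicting Zindler's noncircular examples.

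With the correct relation, the tangent directions carry one free parameter (the paper's $\psi$). It is fixed by the closing condition $\sin\alpha_1+\sin\alpha_3=\sin\alpha_2+\sin\alpha_4$, and the motion is governed by the Bracho--Montejano--Oliveros system $\dot x_i=\sin\alpha_{i-1}-\sin\alpha_i$. Under central symmetry this becomes, after a time change, the Hamiltonian system $r'=2(\cos r+\mu)\sin w$, $w'=-2(\cos w+\mu)\sin r$ with $\mu=\cos 2\pi\sigma$. Its phase portrait around the origin is filled with nonconstant periodic orbits. So the local carousel equations do admit non-regular motions, and no local sign-pattern analysis can give rigidity. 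Neither can $\sum_j v_j=0$ or the turning-number identity, since both hold for every centrally symmetric carousel.

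The contradiction has to be global. The paper normalizes the chord to length $2$ and proceeds in three steps.
\begin{enumerate}
\item It bounds the period of a nonconstant orbit: $2\pi<T<8$ for $\sigma=\frac18$, and $T>5\pi$ for $\sigma=\frac38$.
\item The minimal rotation $R_{\pi/k}$ of $K$ shifts the arclength parameter by $P/(2k)$ and preserves the reduced angles, so $P=2kmT$ with $k,m\ge1$.
\item It compares this with $P\le 8\pi$ ($K$ lies in the disk of radius $4$) and, for $\sigma=\frac18$, with $P\ge16$ (arc versus chord).
\end{enumerate}
Your proposal has no analogue of this period--perimeter comparison, and that comparison is the real content of the proof.
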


The starting point of the proof is a system of differential equations derived
by Bracho, Montejano, and Oliveros~\cite{BMO2001,BMO2004} for the angles
associated with the floating configuration. Central symmetry reduces this
system to two variables. After a change of variables, the equations for
$\sigma=\frac18$ and $\sigma=\frac38$ have the same Hamiltonian form, with
the two cases distinguished only by the sign of a parameter. We study the
periodic solutions of this system and estimate their periods in terms of the
Hamiltonian level. For a noncircular body, these period estimates must also
satisfy a discrete rotational constraint and geometric bounds involving the
perimeter. The resulting conditions are incompatible, ruling out all
nonconstant solutions. The remaining constant solution corresponds to a
disk.

\medskip
\noindent
\textit{Acknowledgments}. We would like to thank Dmitry Ryabogin for drawing our attention to this problem and providing valuable feedback.

\section{The carousel construction}
\label{sec:known-facts}

We recall the carousel construction; see \cite{BMO2004}. Let $K$ be a strictly
convex planar body with perimeter $P$, and let
$\gamma:\R/P\mathbb Z\to\partial K$ be an arc-length parametrization. For
$\sigma\in\{1/8,3/8\}$ set $h=\sigma P$ and
$\beta_i(t)=\gamma(t+(i-1)h)$, $i=1,\ldots,8$. Throughout the paper $K$ is
centrally symmetric, with center at the origin, therefore we have
$\gamma(t+P/2)=-\gamma(t)$.

By Auerbach's formulation of the planar floating condition \cite{Aue1938}, the
floating chord has constant length. Since the problem is scale invariant, we may
normalize this length to be $2$:
\begin{equation}\label{eq:carousel-sides}
    |\beta_{i+1}(t)-\beta_i(t)|=2,
    \qquad i=1,\ldots,8,
\end{equation}
where indices are understood modulo $8$. We consider the moving equilateral octagon $\vv =\vv(t)$ with the vertices $\beta_i(t), i=1,\dots,8$. It is a convex octagon for $\sigma=1/8$, see Figure~\ref{fig:octagon-1-8}. For $\sigma=3/8$ it is an index-three, or star-order, octagon, see Figure~\ref{fig:octagon-3-8}.

Let $x_i$ denote the angle of the octagon $\vv$ at $\beta_i$, that is, the angle between the sides $\beta_{i+1}-\beta_i$ and $\beta_i-\beta_{i-1}$. Let $\alpha_i$ denote the angle between the side $\beta_{i+1}-\beta_i$ and the tangent $\dot\beta_i$
to~$\partial K$ at~$\beta_i$,  see Figures~\ref{fig:octagon-1-8} and  \ref{fig:octagon-3-8}. The constant side-length condition~\eqref{eq:carousel-sides} implies that the angle between the same side $\beta_{i+1}-\beta_i$ and the tangent $\dot\beta_{i+1}$ at $\beta_{i+1}$ is the same.
By central symmetry,
\begin{equation}\label{eq:oct-central}
    \beta_{i+4}=-\beta_i,
    \qquad x_{i+4}=x_i,
    \qquad \alpha_{i+4}=\alpha_i.
\end{equation}
With this angle convention,
\begin{equation}\label{eq:x-alpha}
    x_i+\alpha_{i-1}+\alpha_i=\pi,\qquad i=1,\ldots,4,
\end{equation}
where indices are taken modulo $4$.

\begin{figure}[h]
    \centering
    \includegraphics[width=0.75\linewidth]{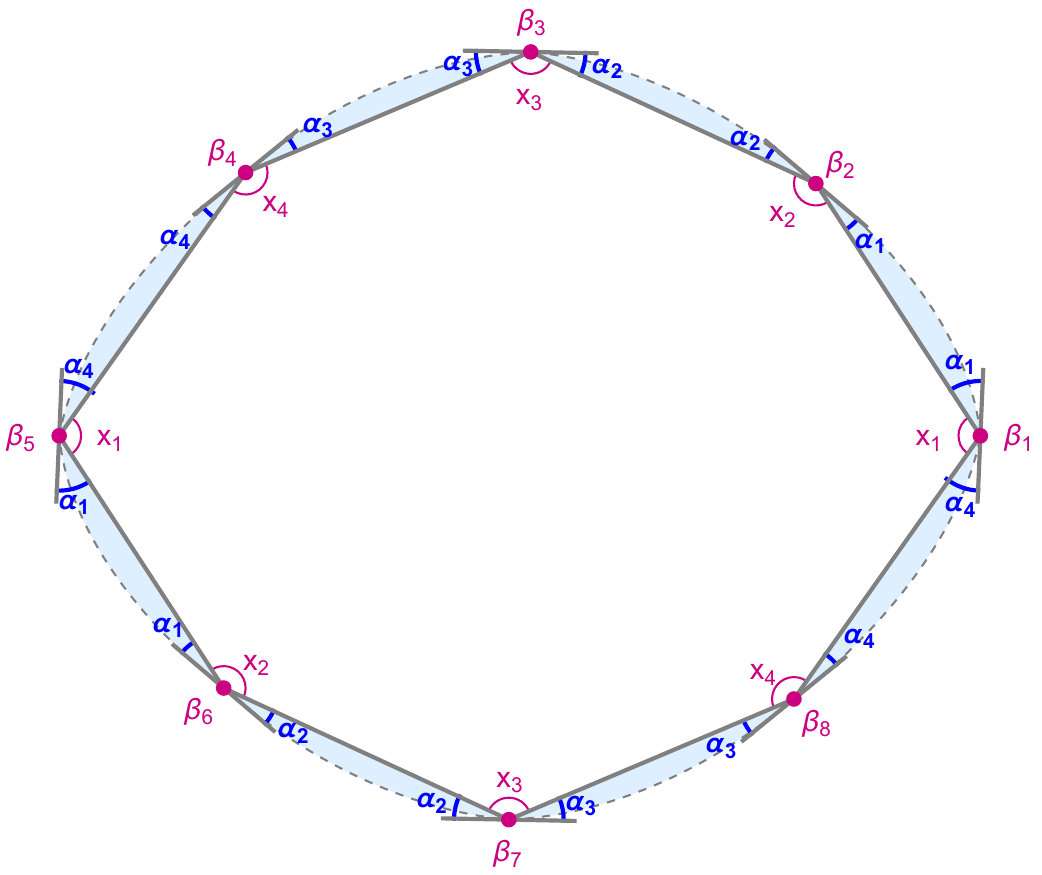}
    \caption{A centrally symmetric convex body $K$ with an inscribed equilateral octagon~$\vv(t)$ corresponding to $\sigma=\frac18$.}
    \label{fig:octagon-1-8}
\end{figure}

\begin{figure}[h]
    \centering
    \includegraphics[width=0.75\linewidth]{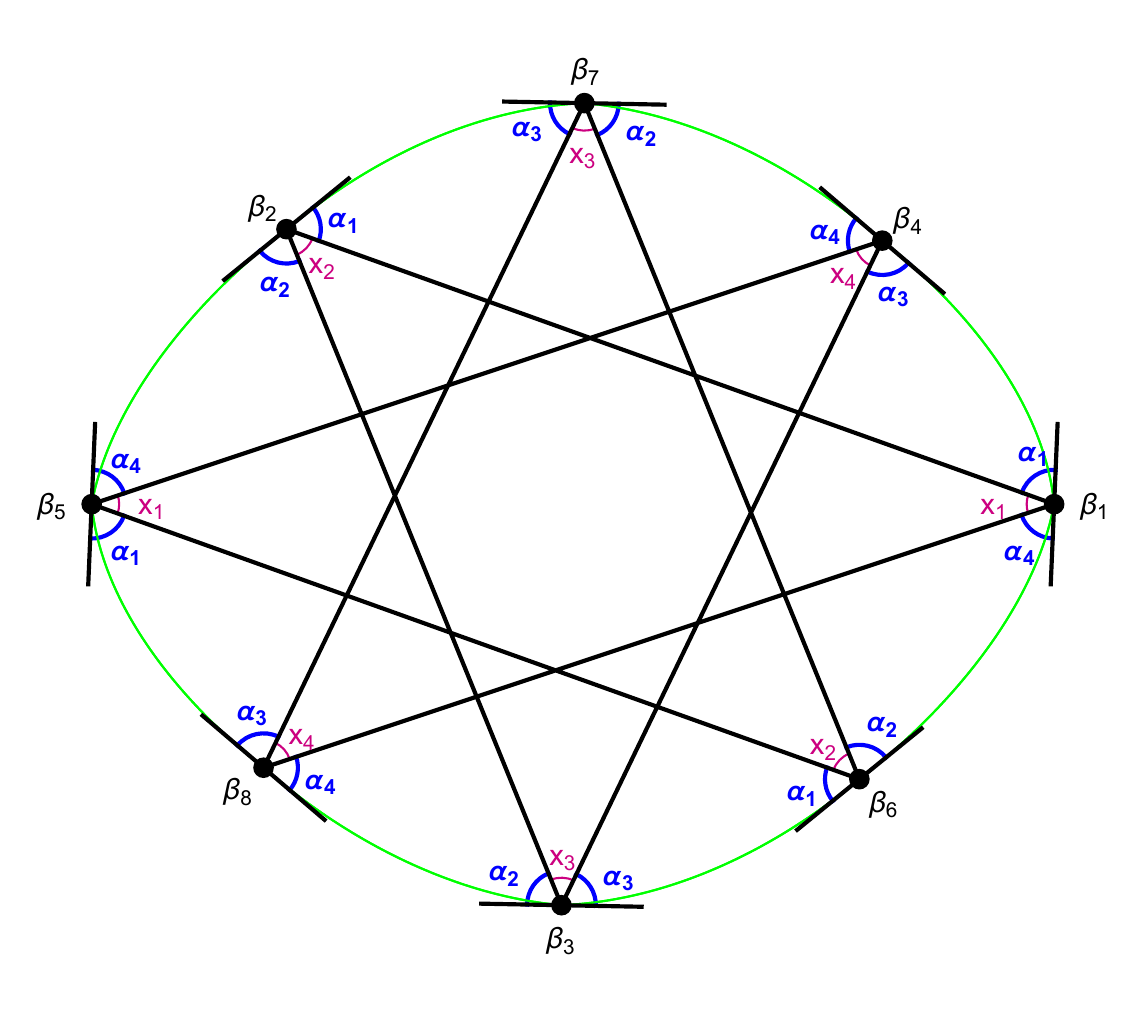}
    \caption{A centrally symmetric convex body $K$ with an inscribed index-three octagon ~$\vv(t)$ corresponding to $\sigma=3/8$.}
    \label{fig:octagon-3-8}
\end{figure}

We use the following theorem of Bracho, Montejano, and Oliveros, adapted to our cases.

\begin{Thm}[{\cite{BMO2001,BMO2004}}]\label{thm:BMO-carousel}
The angles $x_i,\alpha_i$ satisfy
\begin{equation} 
    \dot x_i=\sin\alpha_{i-1}-\sin\alpha_i , \qquad i=1,\dots,4.\label{eq:BMO}
\end{equation}
\end{Thm}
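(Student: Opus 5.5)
We will prove \eqref{eq:BMO} directly by differentiating the direction of each side of the moving octagon, using only the constant side length \eqref{eq:carousel-sides} and unit speed. Let $\theta_i(t)$ be the direction angle of the side $\beta_{i+1}(t)-\beta_i(t)$, so that
\[
\beta_{i+1}-\beta_i=2\,(\cos\theta_i,\sin\theta_i).
\]
The exterior turning angle of $\vv$ at $\beta_i$ is $\theta_i-\theta_{i-1}$. With the angle convention fixed before \eqref{eq:x-alpha}, we therefore have $x_i=\pi-(\theta_i-\theta_{i-1})$, up to an additive constant (and possibly a sign fixed by orientation). This holds for the index-three octagon as well, since we only use the local turning at each vertex. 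Hence
\[
\dot x_i=\dot\theta_{i-1}-\dot\theta_i,
\]
and it suffices to show $\dot\theta_i=\sin\alpha_i$ for every $i$, with the sign conventions matching.

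To compute $\dot\theta_i$, differentiate $\beta_{i+1}-\beta_i=2e(\theta_i)$, where $e(\theta)=(\cos\theta,\sin\theta)$. This gives
\[
\dot\beta_{i+1}-\dot\beta_i=2\dot\theta_i\, e(\theta_i)^\perp .
\]
Both $\dot\beta_i$ and $\dot\beta_{i+1}$ are unit tangent vectors, because $\gamma$ is parametrized by arc length and $\beta_i(t)=\gamma(t+(i-1)h)$.

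Differentiating $|\beta_{i+1}-\beta_i|^2=4$ gives $\langle \dot\beta_{i+1}-\dot\beta_i,\ \beta_{i+1}-\beta_i\rangle=0$. So the two unit tangents have equal projections onto the side direction $e(\theta_i)$. This is exactly the fact, quoted in the text, that the side makes the same angle $\alpha_i$ with $\dot\beta_i$ and with $\dot\beta_{i+1}$.

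By strict convexity, the two unit vectors are not equal; otherwise $\dot\theta_i=0$, which is the degenerate case and is consistent with the formula. They are therefore reflections of each other across the line spanned by $e(\theta_i)$:
\[
\dot\beta_i=\cos\alpha_i\,e(\theta_i)-\sin\alpha_i\,e(\theta_i)^\perp,\qquad
\dot\beta_{i+1}=\cos\alpha_i\,e(\theta_i)+\sin\alpha_i\,e(\theta_i)^\perp .
\]
The signs here are fixed by the counterclockwise orientation, with the chord cutting $K$ and the tangents pointing to opposite sides of the chord. Subtracting gives $2\dot\theta_i=2\sin\alpha_i$, that is, $\dot\theta_i=\sin\alpha_i$.

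Combining the two steps yields $\dot x_i=\sin\alpha_{i-1}-\sin\alpha_i$. The central symmetry \eqref{eq:oct-central} then lets us restrict to $i=1,\dots,4$ with indices taken modulo $4$.

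The main obstacle is the careful bookkeeping of orientation and sign conventions. We must check that the reflected configuration, rather than $\dot\beta_{i+1}=\dot\beta_i$, is the one that occurs: the tangents at the two endpoints of a chord of a strictly convex curve lie on opposite sides of the chord direction. We must also check that $x_i$ is related to $\theta_i-\theta_{i-1}$ with the sign that produces \eqref{eq:x-alpha} in both the convex case ($\sigma=\tfrac18$) and the star case ($\sigma=\tfrac38$). I would verify both on the disk, where $\alpha_i$ is constant and $x_i$ is constant, and on a small first-order perturbation.
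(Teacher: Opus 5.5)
The paper does not prove this statement; it quotes it from Bracho--Montejano--Oliveros \cite{BMO2001,BMO2004} and uses it as a black box. Your argument is a correct, self-contained derivation, and it is the natural direct computation. A chord of fixed length $2$ whose endpoints move at unit speed rotates with angular velocity $\dot\theta_i=\sin\alpha_i$. Since $x_i=\pi-(\theta_i-\theta_{i-1})$ for continuous lifts, this gives $\dot x_i=\sin\alpha_{i-1}-\sin\alpha_i$. Compared with the citation, your route shows that the equation uses only three things: unit speed, constant chord length, and the supporting-line property of $\partial K$. Neither convexity of the octagon nor central symmetry enters; symmetry appears only in the final reindexing modulo $4$. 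So the star case $\sigma=\tfrac38$ needs no separate treatment. The citation buys brevity and the general carousel framework for any number of vertices.

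A few points should be tightened.
\begin{enumerate}
\item Your aside that $\dot\beta_i=\dot\beta_{i+1}$ would be ``consistent with the formula'' is wrong: it gives $\dot\theta_i=0$, while $\sin\alpha_i$ need not vanish. The case is simply excluded, because distinct boundary points of a strictly convex body have distinct tangent directions.
\item The sign check you call the main obstacle needs no examples. By strict convexity, $K\setminus\{\gamma(t)\}$ lies in the open half-plane to the left of the counterclockwise tangent at $\gamma(t)$. Now $e(\theta_i)$ points into $K$ from $\beta_i$, and $-e(\theta_i)$ points into $K$ from $\beta_{i+1}$. Hence $\dot\beta_i$ has negative and $\dot\beta_{i+1}$ has positive $e(\theta_i)^\perp$-component. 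Together with the equal projections on $e(\theta_i)$, this forces exactly your reflected formulas, with $\alpha_i\in(0,\pi)$.
\item There is no sign ambiguity in $x_i=\pi-(\theta_i-\theta_{i-1})$. Apply your formulas to sides $i-1$ and $i$: the direction of $\dot\beta_i$ is both $\theta_{i-1}+\alpha_{i-1}$ and $\theta_i-\alpha_i$. Hence $\theta_i-\theta_{i-1}\equiv\alpha_{i-1}+\alpha_i=\pi-x_i \pmod{2\pi}$ by \eqref{eq:x-alpha}, and this congruence fixes the additive constant for continuous lifts.
\end{enumerate}
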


Central symmetry gives the following identities and bounds for these angles.

\begin{Lemma}\label{lem:known-angle-identities}
For $\sigma\in\{\frac18,\frac38\}$,
\begin{align}
    \alpha_1+\alpha_2+\alpha_3+\alpha_4&=4\pi\sigma,\label{eq:sum-alpha}\\
    x_1+x_3=x_2+x_4&=2\pi(1-2\sigma).\label{eq:opposite-angle-sums}
\end{align}
Moreover, $\pi/2<x_i<\pi$ when $\sigma=1/8$, while $0<x_i<\pi/2$ when
$\sigma=3/8$, for $i=1,\ldots,4$. In both cases,
\begin{equation}\label{eq:sine-balance}
    \sin\alpha_1+\sin\alpha_3=\sin\alpha_2+\sin\alpha_4 .
\end{equation}
\end{Lemma}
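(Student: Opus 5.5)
We will prove the three assertions in order: first the angle sums, then the bounds on the $x_i$, and finally the sine balance.

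\emph{Angle sums.} The plan is to count the total turning of the closed polygon $\vv(t)$. Since $\vv$ is equilateral and winds around the origin with turning number $1$ (for $\sigma=\frac18$) or $3$ (for $\sigma=\frac38$), the exterior angles $\pi-x_i$ add up to $2\pi\cdot 8\sigma$. With \eqref{eq:oct-central} this gives
\[
\sum_{i=1}^8(\pi-x_i)=2\sum_{i=1}^4(\pi-x_i)=16\pi\sigma,
\qquad\text{so}\qquad
\sum_{i=1}^4 x_i=4\pi-8\pi\sigma .
\]
To see that the turning number is $8\sigma$, we argue as follows. The direction of $\beta_{i+1}-\beta_i$ rotates monotonically as $i$ increases. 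This holds because each chord subtends the arc of length $h=\sigma P$, and successive chords advance along $\partial K$. The polygon closes after the arc length $8h=8\sigma P$, so its sides sweep $8\sigma$ full turns; strict convexity is what makes this sweep monotone. Substituting \eqref{eq:x-alpha} into the displayed identity, we get
\[
4\pi-2\sum_{i=1}^4\alpha_i=4\pi-8\pi\sigma,
\]
which is \eqref{eq:sum-alpha}.

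For \eqref{eq:opposite-angle-sums} we use central symmetry directly. Since $\beta_5=-\beta_1$, the sides $\beta_1\beta_2,\dots,\beta_4\beta_5$ form half of the polygon. The direction of $\beta_5-\beta_4$ is the direction of $\beta_1-\beta_8$ reversed. Hence the turning from side $\beta_1\beta_2$ to side $\beta_5\beta_6$, which is $\sum_{i=2}^5(\pi-x_i)$, is exactly half the total turning. This only reproduces the full sum, so separating $x_1+x_3$ from $x_2+x_4$ needs a different tool.

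\emph{Splitting into opposite pairs.} We intend to use the diagonals $\beta_1\beta_3$, $\beta_3\beta_5=-\beta_1\beta_7$, and so on, together with the rhombus structure. Since $\beta_{i+4}=-\beta_i$, the quadrilateral $\beta_1\beta_3\beta_5\beta_7$ is a parallelogram. It is in fact a rhombus, because $|\beta_3-\beta_1|=|\beta_5-\beta_3|$: the triangles $\beta_1\beta_2\beta_3$ and $\beta_3\beta_4\beta_5$ are isosceles with legs $2$, and then
\[
|\beta_3-\beta_1| = 4\sin(x_2/2),\qquad |\beta_5-\beta_3|=4\sin(x_4/2).
\]
This rhombus equality is where the argument is missing a step. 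Equal diagonals would force $\sin(x_2/2)=\sin(x_4/2)$, i.e.\ $x_2=x_4$, which is too strong. So opposite pairs must instead come from a parallelogram angle count. In the parallelogram $\beta_1\beta_3\beta_5\beta_7$, the angles at $\beta_3$ and $\beta_7$ are equal, and the angles at $\beta_1$ and $\beta_3$ add to $\pi$. The angle of the octagon at $\beta_3$ equals the parallelogram angle plus the two base angles $(\pi-x_2)/2$ and $(\pi-x_4)/2$ of the isosceles triangles. Writing the same decomposition at $\beta_1$ and adding the two, the parallelogram angles combine to $\pi$, and we get
\[
x_1+x_3=\pi+(\pi-x_2)+(\pi-x_4)-\ldots,
\]
leading after simplification to
\[
x_1+x_3=2\pi-\tfrac12\textstyle\sum_{i=1}^4 x_i \cdot\ldots .
\]
Combined with the total, this should yield $x_1+x_3=x_2+x_4=2\pi(1-2\sigma)$. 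For $\sigma=\frac38$ the parallelogram is replaced by its star analogue, and we must take care with signs there.

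\emph{Bounds on the $x_i$.} These follow from the pair sums. Each $x_i$ lies in $(0,\pi)$ by strict convexity, since $\alpha_i>0$. The bound on the partner then gives $x_i=2\pi(1-2\sigma)-x_{i+2}$. For $\sigma=\frac18$ this is $x_i=\tfrac{3\pi}{2}-x_{i+2}>\pi/2$. For $\sigma=\frac38$ it is $x_i=\tfrac{\pi}{2}-x_{i+2}<\pi/2$.

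\emph{Sine balance.} Identity \eqref{eq:sine-balance} comes from the dynamics. Differentiating \eqref{eq:opposite-angle-sums} gives $\dot x_1+\dot x_3=0$. Inserting \eqref{eq:BMO},
\[
\sin\alpha_4-\sin\alpha_1+\sin\alpha_2-\sin\alpha_3=0,
\]
which is \eqref{eq:sine-balance}.

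The main obstacle is separating the opposite sums rigorously, that is, the geometric decomposition at $\beta_1$ and $\beta_3$; I would check it carefully with oriented angles.
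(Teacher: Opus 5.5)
There is a genuine gap where you separate $x_1+x_3$ from $x_2+x_4$. That part of your proposal is unfinished, and the parallelogram route cannot be finished. Carry out your own decomposition in the convex case: $x_1=\varphi_1+\frac{\pi-x_4}{2}+\frac{\pi-x_2}{2}$ and $x_3=\varphi_3+\frac{\pi-x_2}{2}+\frac{\pi-x_4}{2}$, where the parallelogram angles satisfy $\varphi_1+\varphi_3=\pi$. This gives $x_1+x_3=3\pi-x_2-x_4$, which is only the total angle sum again. The rhombus claim is also false, as you noticed.

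This failure is unavoidable, because $x_1+x_3=x_2+x_4$ is not a property of centrally symmetric equilateral octagons. Such an octagon is determined up to rigid motion by its four exterior angles $\pi-x_i$. The only constraint on them is their total, $8\pi\sigma$. For example, when $\sigma=\tfrac18$, the choice $x_1=x_2=x_3=\tfrac{4\pi}{5}$, $x_4=\tfrac{3\pi}{5}$ gives a legitimate convex octagon with $x_1+x_3\neq x_2+x_4$. So no argument that uses only the shape of $\vv$ (diagonals, parallelograms, star analogues) can produce the pairing.

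The missing ingredient is the tangent information in \eqref{eq:x-alpha}, which you used only in summed form. Written out for $i=1$ and $i=3$, \eqref{eq:x-alpha} gives $x_1=\pi-\alpha_4-\alpha_1$ and $x_3=\pi-\alpha_2-\alpha_3$. Every $\alpha_j$ appears exactly once, so $x_1+x_3=2\pi-\sum_j\alpha_j$. Likewise $x_2+x_4=2\pi-\sum_j\alpha_j$ from $i=2,4$. You already obtained \eqref{eq:sum-alpha} from the turning number, and combining it with these gives both sums equal to $2\pi-4\pi\sigma=2\pi(1-2\sigma)$. Equivalently, the linear system \eqref{eq:x-alpha} in the unknowns $\alpha_j$ is solvable only when $x_1-x_2+x_3-x_4=0$. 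The identity encodes the fact that each chord makes the same angle $\alpha_i$ with the tangents at both of its endpoints.

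This is exactly the paper's argument. With this replacement, the rest of your proposal is correct and coincides with the paper's proof: the turning-number count, the bounds on $x_i$, and the derivation of \eqref{eq:sine-balance} by differentiating $x_1+x_3=\mathrm{const}$ via \eqref{eq:BMO}.
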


\begin{proof}
 The winding number of the octagon is $8\sigma$. Thus the sum of the exterior angles is  is $16\pi\sigma$, and \eqref{eq:oct-central} gives
$x_1+x_2+x_3+x_4=4\pi-8\pi\sigma$. Summing \eqref{eq:x-alpha} over $i=1,\ldots,4$, we get
\eqref{eq:sum-alpha}. Adding \eqref{eq:x-alpha} for $i=1,3$ and for $i=2,4$, we obtain
\eqref{eq:opposite-angle-sums}. The bounds for $x_i$ now follow from~\eqref{eq:opposite-angle-sums} and the trivial estimates $0<x_i<\pi$ for $i=1,\ldots,4$.

Finally, differentiating $x_1+x_3=x_2+x_4$ and using \eqref{eq:BMO}, we prove
\eqref{eq:sine-balance} in both cases.
\end{proof}

\begin{Lemma}\label{lem:BMO-equilibrium-circle}
In each of the cases $\sigma=1/8$ and $\sigma=3/8$, every equilibrium solution
of \eqref{eq:BMO} corresponds to a circle.
\end{Lemma}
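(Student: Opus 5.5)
An equilibrium solution means $\dot x_i\equiv 0$ for $i=1,\dots,4$, so by \eqref{eq:BMO} we have $\sin\alpha_1=\sin\alpha_2=\sin\alpha_3=\sin\alpha_4$. Together with \eqref{eq:x-alpha}, \eqref{eq:sum-alpha} and the bounds on $x_i$ from Lemma~\ref{lem:known-angle-identities}, the plan is to show that in fact all $\alpha_i$ are equal, hence all $x_i$ are equal and constant. Then we deduce that $\partial K$ is a circle.

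\textbf{Step 1: the $\alpha_i$ are equal.} Since $0<\alpha_i<\pi$, equality of sines gives, for each pair, either $\alpha_i=\alpha_j$ or $\alpha_i+\alpha_j=\pi$. Suppose some pair satisfies $\alpha_i+\alpha_j=\pi$ with $\alpha_i\neq\alpha_j$. If the pair is adjacent, \eqref{eq:x-alpha} gives $x=0$ at the shared vertex, which is impossible. So only opposite pairs $\{1,3\}$ or $\{2,4\}$ can be supplementary. But every $\alpha_k$ lies in $\{a,\pi-a\}$, and adjacent values must then all be equal, say to $a$. Then $\alpha_1$ and $\alpha_3$ are both adjacent to $\alpha_2$, so both equal $a$, and no opposite pair is supplementary either. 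Hence $\alpha_i\equiv a$ with $4a=4\pi\sigma$, so $a=\pi\sigma$ and $x_i=\pi-2\pi\sigma$ for all $i$. Here $a=\pi/8$ or $3\pi/8$, consistent with the bounds on $x_i$.

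\textbf{Step 2: the curve is a circle.} With constant angles, the octagon $\vv(t)$ is a regular equilateral octagon of side $2$ (convex or star) for every $t$. Its vertices therefore lie on a fixed-radius circle centered at its center $c(t)$. By central symmetry $\beta_{i+4}=-\beta_i$, so $c(t)=0$ for all $t$. Hence $|\beta_1(t)|=R$ is constant, and $\gamma$ traces the circle of radius $R$ about the origin, so $K$ is a disk.

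The main point requiring care is Step~1, namely excluding the supplementary branches. Once adjacency is handled through \eqref{eq:x-alpha}, the remaining cases collapse as above, so I expect no genuine obstacle. The regularity of the parametrization needed for Step~2 is only that $\beta_i$ be vertices of a rigid regular polygon, which follows from the constant side lengths \eqref{eq:carousel-sides} and constant angles.
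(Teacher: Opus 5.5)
Your proof is correct and follows the paper's argument: ruling out the supplementary option for adjacent pairs via $x_i=\pi-\alpha_{i-1}-\alpha_i>0$ already forces all $\alpha_i$ to be equal, so your extra discussion of opposite pairs is unnecessary, and then all $x_i$ are equal and the octagons are congruent. Your circumcircle argument in Step~2 amounts to the paper's observation that the fixed-length diagonal satisfies $|\beta_5-\beta_1|=2|\beta_1|$, so $|\gamma(t)|$ is constant.
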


\begin{proof}
At equilibrium, \eqref{eq:BMO} gives $\sin\alpha_{i-1}=\sin\alpha_i$ for all
$i$. Since $\alpha_{i-1}+\alpha_i=\pi-x_i<\pi$, we have
$\alpha_{i-1}=\alpha_i$ for every  $i$. Thus all $\alpha_i$ are equal, and then
\eqref{eq:x-alpha} implies that all $x_i$ are equal. Thus all moving octagons ~$\vv(t)$ 
are congruent. In particular, the length of
the four-step diagonal $\beta_5(t)-\beta_1(t)$ is fixed. By central symmetry,
$|\beta_5(t)-\beta_1(t)|=2|\beta_1(t)|$,
therefore $|\gamma(t)|=|\beta_1(t)|$ is constant. Thus $\partial K$ is a
circle centered at the origin, and $K$ is a disk.
\end{proof}

\begin{Lemma}\label{lem:octagonal-perimeter-upper}
In both cases $\sigma=1/8$ and $\sigma=3/8$, under the normalization that the
floating chord has length $2$, one has $P=|\partial K|\le 8\pi$.
\end{Lemma}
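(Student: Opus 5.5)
The plan is to bound the circumradius of $K$ about its center of symmetry and then use monotonicity of perimeter under inclusion of convex sets. The idea is the same one used at the end of the proof of Lemma~\ref{lem:BMO-equilibrium-circle}: by central symmetry the four-step diagonal of the moving octagon measures the distance from $\partial K$ to the origin.

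First, fix $t$ and consider the vertices $\beta_1(t),\dots,\beta_5(t)$ of the octagon $\vv(t)$. By \eqref{eq:oct-central}, $\beta_5(t)=-\beta_1(t)$, so
\[
2|\gamma(t)| = |\beta_5(t)-\beta_1(t)| \le \sum_{i=1}^{4}|\beta_{i+1}(t)-\beta_i(t)| = 8 ,
\]
using the triangle inequality and the side-length normalization \eqref{eq:carousel-sides}. This step does not use convexity of the octagon, so it holds equally for the convex octagon ($\sigma=1/8$) and for the star-order octagon ($\sigma=3/8$). Since $t$ is arbitrary and $\gamma$ parametrizes all of $\partial K$, this gives $|\gamma(t)|\le 4$ for every $t$. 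Hence $\partial K$, and therefore the convex body $K$, lies in the closed disk $D$ of radius $4$ centered at the origin.

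Second, I would invoke the classical fact that if $K\subseteq D$ are planar convex bodies, then $|\partial K|\le|\partial D|$. This can be justified, for instance, by the Cauchy formula $|\partial K|=\int_0^{\pi} w_K(\theta)\,d\theta$, because the width function is monotone under inclusion. Alternatively, it follows from the fact that the nearest-point projection onto a convex set is $1$-Lipschitz and maps $\partial D$ onto $\partial K$. Applied to the disk of radius $4$, this yields $P\le 2\pi\cdot 4 = 8\pi$.

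I do not expect a real obstacle here. The only points to check are that the vertex $\beta_5$ is indeed the antipode of $\beta_1$ in both cases, which holds because $4h=4\sigma P\equiv P/2 \pmod P$ for $\sigma\in\{1/8,3/8\}$, and that the monotonicity of perimeter is available. The latter is standard, and I would cite it rather than reprove it.
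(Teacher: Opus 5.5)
Your proposal is correct and follows the paper's proof essentially verbatim. The triangle inequality along $\beta_1,\dots,\beta_5$, together with $\beta_5=-\beta_1$, places $K$ in the disk of radius $4$ about the origin, and monotonicity of perimeter for nested convex sets then gives $P\le 8\pi$; your extra checks ($4\sigma P\equiv P/2 \pmod P$ in both cases, and the justification of monotonicity via Cauchy's formula or the nearest-point projection) are valid details that the paper leaves implicit.
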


\begin{proof}
By central symmetry, $\beta_5=-\beta_1$. Therefore
\[
    2|\beta_1|=|\beta_5-\beta_1|
    \le \sum_{j=1}^4 |\beta_{j+1}-\beta_j|=8.
\]
Thus $K$ is contained in the disk of radius $4$ centered at the origin. Since
$K$ is convex, monotonicity of perimeter gives $P\le 8\pi$.
\end{proof}

\section{Hamiltonian system}
\label{sec:2D-system}

For $\sigma\in\{\frac18,\frac38\}$, set
\begin{equation}\label{eq:theta-mu}
    \theta=\pi-2\pi\sigma
    \qquad \text{ and } \qquad 
    \mu=\cos(2\pi\sigma).
\end{equation}
By Lemma~\ref{lem:known-angle-identities}, $x_1+x_3=x_2+x_4=2\theta$ in both cases. Define 
\[r=x_1-\theta, \qquad  w=x_2-\theta,\]
then
\[x_1=\theta+r, \quad x_3=\theta-r, \quad x_2=\theta+w, \quad x_4=\theta-w.\]
The regular carousel configuration corresponds to $r=w=0$, and
by Lemma~\ref{lem:known-angle-identities} the admissible domain is
\begin{equation}\label{eq:rw-domain}
    |r|<\frac\pi4,    \qquad |w|<\frac\pi4 .
\end{equation}

\begin{Prop}\label{prop:2D-system}
After the time change
\begin{equation}\label{eq:s-time}
    \frac{ds}{dt}=\frac1{\Lambda(r,w)}, \qquad \Lambda(r,w):=\sqrt{4(\cos r+\mu)(\cos w+\mu)+2},
\end{equation}
the centrally symmetric octagon dynamics can be reduced to
\begin{equation}\label{eq:rw-system}
    \frac{dr}{ds}=2(\cos r+\mu)\sin w, \qquad \frac{dw}{ds}=-2(\cos w+\mu)\sin r.
\end{equation}
Moreover, the system \eqref{eq:rw-system} is Hamiltonian   with
\begin{equation}\label{eq:Hamiltonian-rw}
    \mathcal H_\mu(r,w)=-2(\cos r+\mu)(\cos w+\mu).
\end{equation}
That is,
\begin{equation}\label{eq:Hamiltonian-equations}
    \frac{dr}{ds}=\frac{\partial \mathcal H_\mu}{\partial w}, \qquad \frac{dw}{ds}=-\frac{\partial \mathcal H_\mu}{\partial r}.
\end{equation}
\end{Prop}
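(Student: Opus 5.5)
The plan is to make Theorem~\ref{thm:BMO-carousel} explicit in the variables $r,w$. We eliminate the $\alpha_i$ using the linear relations \eqref{eq:x-alpha} and the nonlinear constraint \eqref{eq:sine-balance}.

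\emph{Parametrizing the $\alpha_i$.} Write $c=2\pi\sigma$, so that $\theta=\pi-c$ and $\mu=\cos c$. Then \eqref{eq:x-alpha} reads
\[
\alpha_4+\alpha_1=c-r,\quad \alpha_1+\alpha_2=c-w,\quad \alpha_2+\alpha_3=c+r,\quad \alpha_3+\alpha_4=c+w .
\]
This linear system has rank three, and its compatibility condition is exactly \eqref{eq:sum-alpha}. Taking $a=\alpha_1$ as the free parameter gives
\[
\alpha_2=c-w-a,\qquad \alpha_3=a+r+w,\qquad \alpha_4=c-r-a .
\]
Set $p=(r+w)/2$, $q=(r-w)/2$ and $u=a+p$. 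Sum-to-product turns \eqref{eq:sine-balance} into
\[
\sin u\cos p=\sin(c-u)\cos q,
\quad\text{i.e.}\quad
\sin u\,(\cos p+\cos c\cos q)=\sin c\cos q\,\cos u .
\]
Hence $(\sin u,\cos u)$ is proportional to $(\sin c\cos q,\ \cos p+\mu\cos q)$. The normalizing factor is
\[
N^2=\cos^2p+\cos^2q+2\mu\cos p\cos q=1+\cos r\cos w+\mu(\cos r+\cos w).
\]
Since $\mu^2=\tfrac12$ for $\sigma\in\{\tfrac18,\tfrac38\}$, this equals $(\cos r+\mu)(\cos w+\mu)+\tfrac12=\Lambda^2/4$. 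So $\Lambda=2N$, which explains the time change \eqref{eq:s-time}.

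\emph{Fixing the sign.} We must check that the proportionality constant is $+1/N$ rather than $-1/N$. In \eqref{eq:rw-domain} we have $|p|,|q|<\pi/4$, so $\cos p,\cos q>1/\sqrt2$, and also $\sin c>0$. Moreover $u=(\alpha_1+\alpha_3)/2\in(0,\pi)$, because all $\alpha_i\in(0,\pi)$ by strict convexity. Therefore $\sin u>0$, which forces the $+$ sign. This gives
\[
\sin u=\frac{2\sin c\cos q}{\Lambda},\qquad \cos u=\frac{2(\cos p+\mu\cos q)}{\Lambda}.
\]
Checking that these sign and range claims hold uniformly, including in the star case $\sigma=3/8$, is the step I expect to need the most care.

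\emph{Computing the vector field.} By \eqref{eq:BMO}, $\dot r=\dot x_1=\sin\alpha_4-\sin\alpha_1$ and $\dot w=\dot x_2=\sin\alpha_1-\sin\alpha_2$. In terms of $u$, $\alpha_1=u-p$ and $\alpha_4=c-u-q$. Sum-to-product then gives
\[
\dot r=2\cos\!\Big(\tfrac{c-r}{2}\Big)\sin\!\Big(\tfrac{c+w}{2}-u\Big),
\]
and similarly
\[
\dot w=2\cos\!\Big(\tfrac{c-w}{2}\Big)\sin\!\Big(u-\tfrac{c+r}{2}\Big).
\]
Expand $\sin(\cdot\pm u)$ and substitute the formulas for $\sin u,\cos u$. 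Then simplify with product-to-sum identities and $\sin^2c=\tfrac12=\mu^2$. The expected outcome is
\[
\Lambda\,\dot r=2(\cos r+\mu)\sin w,\qquad \Lambda\,\dot w=-2(\cos w+\mu)\sin r .
\]
This trigonometric simplification is routine but lengthy. Dividing by $ds/dt=1/\Lambda>0$ yields \eqref{eq:rw-system}.

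\emph{Hamiltonian structure.} This part is immediate. From \eqref{eq:Hamiltonian-rw},
\[
\partial_w\mathcal H_\mu=2(\cos r+\mu)\sin w,\qquad -\partial_r\mathcal H_\mu=-2(\cos w+\mu)\sin r,
\]
which matches \eqref{eq:rw-system} and so proves \eqref{eq:Hamiltonian-equations}.
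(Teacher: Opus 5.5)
Your proposal is correct and matches the paper's proof step by step: your $u=\alpha_1+(r+w)/2$ is the paper's $\psi$, and your $\cos p,\cos q$ are its $C,D$. You obtain the same normalized expressions for $\sin\psi$ and $\cos\psi$ (with $\Lambda=2N$) and then substitute them into \eqref{eq:BMO}. The one small difference is the sign: you fix it directly from $\sin u>0$ and $\sin c\cos q>0$, which is slightly leaner than the paper's route through $C+\mu D>0$ and $\tan\psi>0$, and the trigonometric simplification you defer does go through using only $\cos c=\mu$.
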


\begin{proof}
Put $\psi=\alpha_1+(r+w)/2$. Since $x_1=\theta+r$ and $x_2=\theta+w$, the
relations \eqref{eq:x-alpha} and Lemma~\ref{lem:known-angle-identities} give
\begin{equation}\label{eq:alpha-rw-unified}
    \alpha_1=\psi-\frac{r+w}{2}, \quad \alpha_3=\psi+\frac{r+w}{2}, \quad
    \alpha_2=\pi-\theta-\psi+\frac{r-w}{2}, \quad \alpha_4=\pi-\theta-\psi-\frac{r-w}{2}.
\end{equation}
Substitution into the sine-balance identity \eqref{eq:sine-balance} gives
\begin{equation}\label{eq:psi-balance}
    \sin\psi\cos\frac{r+w}{2}
    =
    \sin(\pi-\theta-\psi)\cos\frac{r-w}{2}.
\end{equation}
Set $C=\cos\frac{r+w}{2}$ and $D=\cos\frac{r-w}{2}$. By \eqref{eq:theta-mu} we have
$\cos\theta=-\mu$ and $\sin\theta=1/\sqrt2$, thus
\eqref{eq:psi-balance} becomes
\begin{equation}\label{eq:psi-linear}
    (C+\mu D)\sin\psi-\frac{D}{\sqrt2}\cos\psi=0.
\end{equation}
 
The positivity of the four angles in \eqref{eq:alpha-rw-unified} implies $0<\psi<\pi$. Moreover, on the square \eqref{eq:rw-domain}, both $D$ and $C+\mu D$ are positive, thus \eqref{eq:psi-linear} gives
$\tan\psi>0$ and  $0<\psi<\pi/2$. Therefore we have
\begin{equation}\label{eq:psi-sign-unified}
    \cos\psi=\frac{2(C+\mu D)}{\Lambda},\qquad
    \sin\psi=\frac{\sqrt2D}{\Lambda},
\end{equation}
where
\begin{equation}\label{eq:Lambda-CD}
    \Lambda^2=4\left((C+\mu D)^2+\frac{D^2}{2}\right)=4(\cos r+\mu)(\cos w+\mu)+2.
\end{equation}
The last equality uses $C^2+D^2=1+\cos r\cos w$ and
$2CD=\cos r+\cos w$.

By \eqref{eq:BMO}, $\dot r=\sin\alpha_4-\sin\alpha_1$ and
$\dot w=\sin\alpha_1-\sin\alpha_2$. Using
\eqref{eq:alpha-rw-unified} and \eqref{eq:psi-sign-unified},  we obtain
\[
    \Lambda\dot r=2(\cos r+\mu)\sin w, \qquad \Lambda\dot w=-2(\cos w+\mu)\sin r.
\]
After the time change \eqref{eq:s-time}, this proves \eqref{eq:rw-system}. Finally,  \eqref{eq:Hamiltonian-equations} follows from a direct calculation. 
\end{proof}

\begin{Cor}\label{cor:Lambda-Hamiltonian}
Along every orbit of \eqref{eq:rw-system}, both $\mathcal H_\mu$ and
$\Lambda$ are constant and 
\begin{equation}\label{eq:first-integral}
    \Lambda^2=4(\cos r+\mu)(\cos w+\mu)+2=2-2\mathcal H_\mu.
\end{equation}
\end{Cor}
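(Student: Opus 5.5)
The claim follows from Proposition~\ref{prop:2D-system} and a direct computation, so the plan is short. First I will show that $\mathcal H_\mu$ is constant along orbits of \eqref{eq:rw-system}. By the chain rule and the Hamiltonian form \eqref{eq:Hamiltonian-equations},
\[
\frac{d}{ds}\mathcal H_\mu(r,w)=\frac{\partial\mathcal H_\mu}{\partial r}\frac{dr}{ds}+\frac{\partial\mathcal H_\mu}{\partial w}\frac{dw}{ds}
=\frac{\partial\mathcal H_\mu}{\partial r}\frac{\partial\mathcal H_\mu}{\partial w}-\frac{\partial\mathcal H_\mu}{\partial w}\frac{\partial\mathcal H_\mu}{\partial r}=0 .
\]
As a check, I can also verify this directly from \eqref{eq:rw-system}. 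We have $\partial_r\mathcal H_\mu=2\sin r(\cos w+\mu)$ and $\partial_w\mathcal H_\mu=2(\cos r+\mu)\sin w$. Multiplying these by $dr/ds$ and $dw/ds$ respectively gives two terms that cancel.

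Next, the identity \eqref{eq:first-integral} is pure algebra. By \eqref{eq:Hamiltonian-rw}, $4(\cos r+\mu)(\cos w+\mu)=-2\mathcal H_\mu$. Substituting this into the definition \eqref{eq:s-time} of $\Lambda$ gives $\Lambda^2=2-2\mathcal H_\mu$. Consequently $\Lambda^2$ is constant along orbits.

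Finally, I need $\Lambda$ itself, not just $\Lambda^2$, to be constant. Since $\Lambda$ is defined as the nonnegative square root, it suffices that $\Lambda^2$ is constant and nonnegative. On the square \eqref{eq:rw-domain} both factors $\cos r+\mu$ and $\cos w+\mu$ are positive. Indeed, $\cos r>\cos(\pi/4)=1/\sqrt2$, and $\mu=\pm 1/\sqrt2$ for $\sigma=\tfrac18,\tfrac38$. Hence $\Lambda^2>2$ and $\Lambda$ is constant along orbits.

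There is one point to note. Constancy along $s$-orbits is the same as constancy along $t$-trajectories, because the time change $ds/dt=1/\Lambda$ is a positive reparametrization and so does not change the orbits. I do not expect any real obstacle here. The only care needed is keeping track of the sign of $\mu$ when checking positivity of $\cos r+\mu$ for $\sigma=3/8$, where $\mu=-1/\sqrt2$. This is exactly what the strict bounds $|r|,|w|<\pi/4$ handle.
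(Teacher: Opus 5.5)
Your proof is correct and follows the paper's argument. You get conservation of $\mathcal H_\mu$ from the Hamiltonian form in Proposition~\ref{prop:2D-system}, and the identity $\Lambda^2=2-2\mathcal H_\mu$ by substituting \eqref{eq:Hamiltonian-rw} into \eqref{eq:s-time}; your explicit chain-rule computation and the positivity check of $\cos r+\mu$, $\cos w+\mu$ on the square just fill in details the paper leaves implicit.
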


\begin{proof}
The Hamiltonian is preserved by Proposition~\ref{prop:2D-system}. Equation \eqref{eq:first-integral} follows from  \eqref{eq:s-time}   and \eqref{eq:Hamiltonian-rw}.
\end{proof}

\section{One-dimensional reduction and period bounds}
\label{sec:period-bounds}

Fix a nonconstant orbit. Since $\cos r+\mu>0$ on the admissible domain, the
equation
\begin{equation}\label{eq:r-equation}
    \frac{dr}{ds}=2(\cos r+\mu)\sin w
\end{equation}
shows that the turning points of the $r$-motion occur exactly when $w=0$.
Choose one such point and write it as $(r,w)=(\lambda,0)$, where
$0<\lambda<\pi/4$, and set $L=\cos\lambda$. Since
$(\cos r+\mu)(\cos w+\mu)$ is constant on the orbit, evaluating it at
$(\lambda,0)$ gives
\begin{equation}\label{eq:first-integral-turning}
    (\cos r+\mu)(\cos w+\mu)=(L+\mu)(1+\mu)
\end{equation}
along the entire orbit, and thus
  $\sin^2 w  = 1-\left(\frac{(L+\mu)(1+\mu)}{\cos r+\mu}-\mu\right)^2$. After squaring \eqref{eq:r-equation} and substituting this expression, we get
  \[ 
  \left(\frac{dr}{ds}\right)^2 = 4\left((\cos r+\mu)^2-\left((L+\mu)(1+\mu)-\mu(\cos r+\mu)\right)^2\right). 
  \]
  Factoring the difference of squares,
  we obtain \begin{equation}\label{eq:rw-1d}
  \left(\frac{dr}{ds}\right)^2 = 4(1+\mu)(\cos r-L) \bigl((1-\mu)\cos r+(1+\mu)L+2\mu\bigr).
  \end{equation}
 Therefore the minimal $s$-period is given by
\begin{equation}\label{eq:Ts-period}
    T_s
    :=  \int_{-\lambda}^\lambda \frac{dr}{\sqrt{(1+\mu)(\cos r-L)\bigl((1-\mu)\cos r+(1+\mu)L+2\mu\bigr)}}.
\end{equation}
By Corollary~\ref{cor:Lambda-Hamiltonian}, $\Lambda$ is constant on the orbit, and we can evaluate it at
the turning point:
$\Lambda^2=4(L+\mu)(1+\mu)+2=4(1+\mu)(1+L)$ because $\mu^2=1/2$. Since the integrand in \eqref{eq:Ts-period} is even in $r$, we can express the 
minimal $t$-period as
\begin{equation}\label{eq:Tt-period}
    T_t
    :=\Lambda T_s = 4\sqrt{1+L}\int_0^\lambda \frac{dr}{\sqrt{(\cos r-L) \bigl((1-\mu)\cos r+(1+\mu)L+2\mu\bigr)}}.
\end{equation}

\begin{Prop}\label{prop:period-bounds}
 For $\sigma=1/8$, the minimal $t$-period $T_t$ of every
nonconstant admissible orbit satisfies  $2\pi<T_t<8$.
For $\sigma=3/8$, we have 
    $T_t>5\pi.$
\end{Prop}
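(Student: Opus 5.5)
The plan is to rewrite the period integral \eqref{eq:Tt-period} with a substitution that removes the square-root singularity at $r=\lambda$. After that, both bounds should follow from pointwise estimates of a bounded integrand over a fixed interval; no monotonicity in $\lambda$ is needed. Throughout, $\mu=1/\sqrt2$ for $\sigma=1/8$ and $\mu=-1/\sqrt2$ for $\sigma=3/8$, and $0<\lambda<\pi/4$, so $1/\sqrt2<L<1$.

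\emph{Step 1 (regularization).} Write $\cos r-L=2\bigl(\sin^2\frac\lambda2-\sin^2\frac r2\bigr)$ and substitute $\sin\frac r2=\sin\frac\lambda2\sin\varphi$ with $\varphi\in[0,\pi/2]$. Then $dr=2\sin\frac\lambda2\cos\varphi\,d\varphi/\cos\frac r2$ and $\cos r-L=2\sin^2\frac\lambda2\cos^2\varphi$, so the singular factor cancels:
\[
T_t=4\sqrt2\int_0^{\pi/2}\frac{\sqrt{1+L}}{\cos\frac r2\,\sqrt{Q(r)}}\,d\varphi,\qquad Q(r):=(1-\mu)\cos r+(1+\mu)L+2\mu,
\]
where $r=r(\varphi)\in[0,\lambda]$. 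We also record that $Q(r)>0$ for $r\in[0,\lambda)$. This holds because the right-hand side of \eqref{eq:rw-1d} is a square of a real quantity along the orbit, and $\cos r>L$ there.

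\emph{Step 2 (lower bounds, both cases).} Since $\mu^2=1/2$, we have $1-\mu+2\mu=1+\mu$. Using $\cos r\le1$ (with $1-\mu>0$) gives
\[
Q(r)\le(1-\mu)+(1+\mu)L+2\mu=(1+\mu)(1+L).
\]
Using also $\cos\frac r2\le1$, with strict inequality for $\varphi>0$, each integrand is strictly larger than $1/\sqrt{1+\mu}$. Hence $T_t>2\sqrt2\,\pi/\sqrt{1+\mu}$.
\begin{itemize}
\item For $\mu=1/\sqrt2$: $2\sqrt2\pi/\sqrt{1+1/\sqrt2}\approx 6.80>2\pi$.
\item For $\mu=-1/\sqrt2$: $2\sqrt2\pi/\sqrt{1-1/\sqrt2}\approx16.4>5\pi\approx15.71$.
\end{itemize}
This settles the lower bound for $\sigma=1/8$ and the whole statement for $\sigma=3/8$.

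\emph{Step 3 (upper bound for $\sigma=1/8$).} Here $\mu=1/\sqrt2$, so $2\mu=\sqrt2$. Since $r\le\lambda$ gives $\cos r\ge L$, we get $Q(r)\ge 2L+\sqrt2$. Also $r<\pi/4$ gives $\cos\frac r2>\cos\frac\pi8$. The ratio $(1+L)/(2L+\sqrt2)$ has derivative of the sign of $\sqrt2-2<0$, so it is decreasing in $L$. It is therefore maximized at $L=1/\sqrt2$, where it equals $(1+1/\sqrt2)/(2\sqrt2)\approx0.604$. Consequently
\[
T_t<2\sqrt2\,\pi\cdot\frac{\sqrt{0.604}}{\cos(\pi/8)}\approx 7.47<8.
\]

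\emph{Main obstacle.} The only delicate point is Step 3: the crude bounds must leave enough margin below $8$. Replacing $\cos r$ by $L$ in $Q$ and $\cos\frac r2$ by $\cos\frac\pi8$ loses some sharpness, but the computation above gives $\approx7.47$, which is comfortably below $8$. If a sharper estimate were ever needed, I would keep the exact $\varphi$-dependence of $\cos\frac r2$ and integrate it. In the final write-up the numerical constants should be verified with rigorous rational bounds, for example $\cos\frac\pi8>0.92$ and $\sqrt2<1.4143$.
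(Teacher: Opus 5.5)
Your proof is correct and is essentially the paper's argument: the paper uses the same pointwise bounds $2(L+\mu)<Q<(1+\mu)(1+L)$ and evaluates $\int_0^\lambda\sqrt{(1+\cos r)/(\cos r-L)}\,dr=\pi$, which amounts to your substitution $\sin\frac r2=\sin\frac\lambda2\sin\varphi$. The only difference is in the upper bound: the paper uses $\cos\frac r2\ge\cos\frac\lambda2=\sqrt{(1+L)/2}$ instead of $\cos\frac\pi8$, so the factor $\sqrt{1+L}$ cancels and $T_t<4\pi/\sqrt{2(L+\mu)}<4\pi/2^{3/4}\approx7.47$ follows without your monotonicity-in-$L$ step (your constant is in fact the same number).
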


\begin{proof}
 For $r\in(0,\lambda)$ we have $L<\cos r<1$. Since $1-\mu>0$,
\[
\frac{2(L+\mu)(1+L)}{1+\cos r}<2(L+\mu)<(1-\mu)\cos r+(1+\mu)L+2\mu<(1+\mu)(1+L)<\frac{2(1+\mu)(1+L)}{1+\cos r}.
\]

From \eqref{eq:Tt-period}, we get
\[
\frac{4}{\sqrt{2(1+\mu)}}I_\lambda<T_t<\frac{4}{\sqrt{2(L+\mu)}}I_\lambda,
\qquad
I_\lambda:=\int_0^\lambda \sqrt{\frac{1+\cos r}{\cos r-L}}\,dr.
\]
Since $I_\lambda=\pi$, we have 
\[
\frac{4\pi}{\sqrt{2(1+\mu)}}<T_t<\frac{4\pi}{\sqrt{2(L+\mu)}}.
\]
For $\sigma=1/8$, $\mu=1/\sqrt2$ and $L=\cos\lambda>1/\sqrt2$, thus $2\pi<T_t<8$. For $\sigma=3/8$, $\mu=-1/\sqrt2$, and the lower bound gives $T_t>5\pi$.
\end{proof}

\section{Proof of the main theorem}
\label{sec:proof}

Assume, for contradiction, that $K$ satisfies the hypotheses of
Theorem~\ref{thm:main} but is not a disk. Let
$\Rot(K)=\{\theta\in[0,2\pi):R_\theta(K)=K\}$ be its rotational symmetry
group. Since $K$ is not a disk, $\Rot(K)$ is a finite cyclic group. Since $K$
is centrally symmetric, its order is even, so $\Rot(K)=\langle R_{\pi/k}\rangle$
for some $k\in\mathbb N$.

Choose $\gamma$ to be positively oriented. The rotation $R_{\pi/k}$ preserves
arc length and orientation on $\partial K$. Since $R_{\pi/k}$ is the smallest
positive rotation in $\Rot(K)$,
\begin{equation}\label{eq:rotation-shift}
    R_{\pi/k}\gamma(t)=\gamma\left(t+\frac{P}{2k}\right).
\end{equation}

By Lemma~\ref{lem:BMO-equilibrium-circle}, the reduced orbit is nonconstant.
Let $T$ denote its minimal period in the original $t$-parameter. Since the
shift in \eqref{eq:rotation-shift} preserves the reduced variables, there
exists $m\in\mathbb N$ such that
\begin{equation}\label{eq:period-quantization}
    \frac{P}{2k}=mT.
\end{equation}
Thus $P=2kmT$.

Consider first $\sigma=1/8$. By Proposition~\ref{prop:period-bounds},
$T>2\pi$. Combining this with Lemma~\ref{lem:octagonal-perimeter-upper} gives
$2km\cdot2\pi<P\le8\pi$, thus $km<2$. Since $km$ is a positive integer, we get
$km=1$ and $P=2T$. On the other hand, each boundary arc from $\beta_i$ to
$\beta_{i+1}$ has length $P/8$, while the corresponding chord has length $2$.
Thus $P/8\ge2$, and so $T=P/2\ge8$. This contradicts the upper bound $T<8$ in
Proposition~\ref{prop:period-bounds}. Thus $K$ must be a disk for
$\sigma=1/8$.

Now consider $\sigma=3/8$. By Proposition~\ref{prop:period-bounds},
$T>5\pi$. Since $P=2kmT$ and $km\ge1$, we get $P>10\pi$. This contradicts
Lemma~\ref{lem:octagonal-perimeter-upper}. Thus $K$ must also be a disk for
$\sigma=3/8$.


\begin{thebibliography}{99}

\bibitem{ARSY2026}
M.~A. Alfonseca, D.~Ryabogin, A.~Stancu, and V.~Yaskin.
\newblock \emph{On flotation, stability and related questions: A survey}.
\newblock To appear in \emph{New Probes into Discrete and Convex Geometry},
J.~Pach and G.~T\'oth, eds., Springer, 2026.

\bibitem{AKZsigma16} O.~Asipchuk, M.~Kosmakov, and P.~Zatitskii. \newblock \emph{Rigidity in the planar Ulam floating body problem with perimetral density $\sigma=\frac{1}{6}$}. \newblock Preprint, \href{https://arxiv.org/abs/2604.10330}{arXiv:2604.10330}, 2026.

\bibitem{Aue1938}
H.~Auerbach.
\newblock \emph{Sur un probl\`eme de M.~Ulam concernant l'\'equilibre des
corps flottants}.
\newblock \emph{Studia Math.} \textbf{7} (1938), 121--142.

\bibitem{BMO2001}
J.~Bracho, L.~Montejano, and D.~Oliveros.
\newblock \emph{A classification theorem for Zindler carrousels}.
\newblock \emph{J. Dynam. Control Systems} \textbf{7} (2001), no.~3,
367--384.

\bibitem{BMO2004}
J.~Bracho, L.~Montejano, and D.~Oliveros.
\newblock \emph{Carousels, Zindler curves and the floating body problem}.
\newblock \emph{Period. Math. Hungar.} \textbf{49} (2004), no.~2, 9--23.

\bibitem{Falconer1983}
K.~J. Falconer.
\newblock \emph{Applications of a result on spherical integration to the
theory of convex sets}.
\newblock \emph{Amer. Math. Monthly} \textbf{90} (1983), no.~10, 690--693.

\bibitem{FlorentinSchuttWernerZhang2022}
D.~I. Florentin, C.~Sch\"utt, E.~M. Werner, and N.~Zhang.
\newblock \emph{Convex floating bodies of equilibrium}.
\newblock \emph{Proc. Amer. Math. Soc.} \textbf{150} (2022), no.~7,
3037--3048.


\bibitem{SBook2015}
R.~D. Mauldin, editor.
\newblock \emph{The Scottish Book: Mathematics from the Scottish Caf\'e,
with Selected Problems from the New Scottish Book}.
\newblock Birkh\"auser/Springer, Cham, 2nd updated and enlarged ed., 2015.

\bibitem{Ryabogin2022}
D.~Ryabogin.
\newblock \emph{A negative answer to Ulam's Problem 19 from the Scottish
Book}.
\newblock \emph{Ann. of Math. (2)} \textbf{195} (2022), no.~3, 1111--1150.

\bibitem{Ryabogin2023}
D.~Ryabogin.
\newblock \emph{On bodies floating in equilibrium in every orientation}.
\newblock \emph{Geom. Dedicata} \textbf{217} (2023), no.~4,
Paper No.~70.

\bibitem{Salkowski1934}
E.~Salkowski. 
\newblock \emph{Eine kennzeichnende Eigenschaft des Kreises}.
\newblock \emph{Sb. d. Heidelberger Akad. d. Wissensch., Math.-nat. Klasse} (1934) p. 57--62.

\bibitem{Schneider1970}
R.~Schneider.
\newblock \emph{Functional equations connected with rotations and their
geometric applications}.
\newblock \emph{Enseign. Math. (2)} \textbf{16} (1970), 297--305.

\bibitem{Wegner2003}
F.~Wegner.
\newblock \emph{Floating bodies of equilibrium}.
\newblock \emph{Stud. Appl. Math.} \textbf{111} (2003), 167--183.

\bibitem{Weg2007} F.~J. Wegner. \newblock \emph{Floating bodies of equilibrium in 2D, the tire track problem and electrons in a parabolic magnetic field}. \newblock Preprint, \href{https://arxiv.org/abs/physics/0701241}{arXiv:physics/0701241}, 2007.

\bibitem{Weg2019} F.~J. Wegner. \newblock 
\emph{From Elastica to Floating Bodies of Equilibrium}. \newblock Preprint, \href{https://arxiv.org/abs/1909.12596}{arXiv:1909.12596}, 2019.

\bibitem{Zindler1921}
K.~Zindler.
\newblock \emph{\"Uber konvexe Gebilde. II. Teil}.
\newblock \emph{Monatsh. Math. Phys.} \textbf{31} (1921), 25--56.

\end{thebibliography}
\end{document}